\author{Fabio Gobbi \hspace{0.4cm} Sabrina Mulinacci\\
\vspace{0.4cm} University of Bologna - Department of Statistics}
\newtheorem{theorem}{Theorem}[section]
\newtheorem{corollary}[theorem]{Corollary}
\newtheorem{proposition}{Proposition}[section]
\begin{document}

\title{$\beta$-mixing and moments properties of a non-stationary copula-based Markov process}

\maketitle

\begin{abstract}
This paper provides conditions under which a non-stationary copula-based Markov process is $\beta$-mixing. We introduce, as a particular case, a
convolution-based gaussian Markov process
which generalizes the standard random walk allowing the increments to be dependent.
 \end{abstract}
\bigskip

JEL classification: C22,C10

 Mathematics Subject Classification
(2010): 62M10, 62H20 \vspace{0.2cm}

 {\bf Keywords}: Markov process, copula, $\beta$-mixing, gaussian process.

\medskip

\section{Introduction}
In this paper we analyze the temporal dependence properties
satisfied by a discrete times non-stationary Markov process.
Temporal dependence is relevant since it permits to verify of how
well theoretical models explain temporal persistency observed in
financial data. Moreover, it is also a useful tool to establish
large sample properties of estimators for dynamic models. In
particular, in this paper we analyze the $\beta$-mixing property and we give sufficient conditions that ensure this property be satisfied.

In the
copula approach to univariate time series modelling, the finite
dimensional distributions are generate by copulas. Darsow et al.
(1992) provide necessary and sufficient conditions for a
copula-based time series to be a Markov process. Recent literature on this topic has mainly focused on the stationary case. Chen and Fan
(2006) introduce a copula-based strictly stationary first order
Markov process generated from $(G_0(\cdot),
C(\cdot,\cdot,\alpha_0))$ where $G_0(\cdot)$ is the invariant
distribution of $Y_t$ and $C(\cdot,\cdot,\alpha_0)$ is the
parametric copula for $(Y_{t-1},Y_t)$. The authors show that the
$\beta$-mixing temporal dependence measure is purely determined by
the properties of copulas and present sufficient conditions to
ensure that the process $(Y_t)_t$ based on gaussian and EFGM
copulas are geometric $\beta$-mixing. Beare (2010) shows that all
Markov models generated via symmetric copulas with positive and
square integrable densities are geometric $\beta$-mixing. Many
commonly used bivariate copulas without tail dependence such as
gaussian, EFGM and Frank copulas satisfy this condition. Chen et
al. (2009) show that Clayton, Gumbel and Student's $t$ copula
based Markov models are geometrically ergodic which is a stronger
condition than the geometric $\beta$-mixing.

In this paper we focus on Markov processes where some
 dependence between each state variable and increment is allowed and modeled through a time-invariant copula.
In particular, we introduce a gaussian Markov process, which is non-stationary and generalizes the classical gaussian random walk, and we
study related moments properties and provide conditions under wich the process is $\beta$-mixing.
\bigskip

The paper is organized as follows. Section \ref{markov} presents a general result on the
 $\beta$-mixing properties satisfied by non-stationary Markov processes.
Section \ref{gaussian} restricts the study to the gaussian case. Section \ref{conclusion} concludes.
\section{Copula-based Markov processes and $\beta$-mixing properties}\label{markov}
Throughout the paper ${\bf Y}=(Y_t)_{t\in\mathbb Z}$ is a discrete
time Markov process. Thanks to the seminal paper of  Darsow et al.
(1992), the markovianity of a stochastic process can be
characterized through a specific requirement that the copulas,
representing the dependence structure of the finite dimensional
distributions induced by the stochastic process (for a detailed
discussion on copulas see Nelsen (2006), Joe (1997), Cherubini et
al. (2012) and Durante and Sempi (2015)), must satisfy. In
particular, in Darsow et al (1992) it is proved that the
Chapman-Kolmogorov equations for transition probabilities are
equivalent to the requirement that, if $C_{i,j}$ is the copula
associated to the vector $(Y_i,Y_j)$, then
$$C_{s,t}(u,v)=C_{s,r}\ast C_{r,t}(u,v)=\int_0^1\frac{\partial}{\partial w}C_{s,r}(u,w)\frac{\partial}{\partial w}C_{r,t}(w,v)\, dw,\, \forall s<r<t.$$
As a consequence, since ${\bf Y}$ is a discrete times Markov
process, if we assume that the set of bivariate copulas
$C_{t,t+1}$ (representing the dependence structure of the
stochastic process at two adjacent times) is given for
$t\in\mathbb Z$ and $k>0$, then necessarily (we remind that the
$\ast$-operator is associative)
\begin{equation}\label{star1}C_{t,t+k}(u,v)=C_{t,t+k-1}\ast C_{t+k-1,t+k}(u,v)=C_{t,t+1}\ast C_{t+1,t+2}\ast\cdots\ast C_{t+k-1,t+k}(u,v).\end{equation}
\bigskip

Notice that, in the stationary case considered in Beare (2010), $C_{t,t+1}=C$ for all $t\in\mathbb Z$,
therefore all bivariate copulas $C_{t,t+k}$
are functions of the copula $C$ and of the lag $k$ and not of the
 time $t$. In this paper we extend the study to the more general non-stationary case. In particular
we analyze the temporal dependence problem with a special attention to mixing properties.

The notion of $\beta$-mixing was introduced by Volkonskii and
Rozanov (1959 and 1961) and was attribute there to Kolmogorov.
Given a (not necessarily stationary) sequence of random variables
${\bf Y}=(Y_t)_{t \in \mathbb{Z}}$, let $\mathcal{F}_{t}^{l}$ be
the $\sigma$-field $\mathcal{F}_{t}^{l}=\sigma(Y_t,t\leq t \leq
l)$ with $-\infty \leq t \leq l \leq +\infty$ and let
\begin{equation}\label{beta1}\tilde{\beta}(\mathcal{F}_{-\infty}^{t},\mathcal{F}_{t+k}^{+\infty})=\sup_{\{A_i\},\{B_j\}}\frac{1}{2}\sum_{i=1}^I \sum_{j=1}^J|\mathbb{
P}(A_i\cap B_j)-\mathbb{P}(A_i)\mathbb{P}(B_j)|,\end{equation}
where the
second supremum is taken over all finite partitions
$\{A_1,...A_I\}$ and $\{B_1,...B_J\}$ of $\Omega$ such that $A_i
\in \mathcal{F}_{-\infty}^t$ for each $i$ and $B_j \in
\mathcal{F}_{t+k}^{\infty}$ for each $j$. Define the following
dependence coefficient
$$\beta_k=\sup_{t \in \mathbb{Z}}\tilde{\beta}(\mathcal{F}_{-\infty}^{t},\mathcal{F}_{t+k}^{+\infty}).$$
We say that the sequence $(Y_t)_{t \in \mathbb{Z}}$ is
$\beta-$mixing (or absolutely regular) if $\beta_k \rightarrow 0$
as $k \rightarrow +\infty$.

In next Theorem we give conditions on the set of copulas $C_{t,t+1}$, $t\in\mathbb Z$ in order to
guarantee that the Markov resulting process is $\beta$-mixing. These conditions are based on specific requirements
on the maximal correlation coefficients of the copulas $C_{t,t+1}$. We remind that the maximal correlation $\eta $ of a copula $C$ is given by
$$\underset{f,g}\sup\left \vert \int_0^1\int_0^1f(x)g(x)C(dx,dy)\right \vert $$
where $f,g\in L^2([0,1])$, $\int_0^1f(x)dx=\int_0^1g(x)dx=0$ and $\int_0^1f^2(x)dx=\int_0^1g^2(x)dx=1$ and we refer to Beare (2010) and R\'enyi (1959) for more details.

\begin{theorem}\label{teo} Let ${\bf Y}=(Y_t)_{t \in \mathbb{Z}}$ be a
Markov process. Let $C_{t,t+1}$ be the copula associated to the vector
 $(Y_t,Y_{t+1})$ for $t\in\mathbb Z$ that we assume to be
absolutely continuous, with symmetric and square-integrable density
$c_{t,t+1}$ so that $(c_{t,t+1})_{t\in\mathbb Z}$ is uniformly bounded in $L^2([0,1])$.

If the maximal correlation coefficients $\eta_{t}$ of $C_{t,t+1}$ satisfy
\begin{equation}\label{ipo1}\hat\eta=\underset{t\in\mathbb Z}\sup\,\eta_t<1,\end{equation}
then ${\bf Y}$ is $\beta$-mixing.
\end{theorem}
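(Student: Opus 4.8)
The plan is to carry out three steps: reduce the $\beta$-mixing coefficients of $\mathbf Y$ to a quantity depending only on the two-dimensional copulas $C_{t,t+k}$; rewrite those copula densities via composition of integral operators on $L^2([0,1])$; and bound the resulting Hilbert--Schmidt norms uniformly in $t$ using hypothesis (\ref{ipo1}), along the lines of Beare (2010) but keeping the uniform-in-$t$ bookkeeping forced by non-stationarity. \emph{Step 1.} Since $\mathbf Y$ is Markov, conditioning on $Y_t$ renders $\mathcal F_{-\infty}^t$ and $\mathcal F_{t+k}^{+\infty}$ independent, and likewise conditioning on $Y_{t+k}$; a standard argument for Markov processes then gives $\tilde\beta(\mathcal F_{-\infty}^t,\mathcal F_{t+k}^{+\infty})=\tilde\beta(\sigma(Y_t),\sigma(Y_{t+k}))$, which depends on $(Y_t,Y_{t+k})$ only through $C_{t,t+k}$. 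For an absolutely continuous copula $C$ with density $c$, the $\beta$-coefficient of a pair of uniforms with copula $C$ equals $\tfrac12\int_0^1\!\int_0^1|c(u,v)-1|\,du\,dv\le\tfrac12\|c-1\|_{L^2([0,1]^2)}$ by Cauchy--Schwarz, so it suffices to show $\sup_{t\in\mathbb Z}\|c_{t,t+k}-1\|_{L^2([0,1]^2)}\to 0$ as $k\to\infty$.

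\emph{Step 2.} To a copula density $c$ associate the integral operator $(T_cf)(u)=\int_0^1 c(u,w)f(w)\,dw$ on $L^2([0,1])$. Differentiating the Chapman--Kolmogorov identity (\ref{star1}) shows that the density of a $\ast$-product of copulas is exactly the kernel of the composition of the associated operators, so $T_{c_{t,t+k}}=A_1A_2\cdots A_k$ with $A_j:=T_{c_{t+j-1,t+j}}$. Each $A_j$ is Hilbert--Schmidt with $\|A_j\|_{\mathrm{HS}}=\|c_{t+j-1,t+j}\|_{L^2([0,1]^2)}\le M:=\sup_{s}\|c_{s,s+1}\|_{L^2([0,1]^2)}<\infty$ (the uniform bound of the hypothesis), is self-adjoint (symmetry of the density), fixes the constant function $\mathbf 1$ and maps its orthogonal complement $\{\mathbf 1\}^{\perp}$ into itself (the margins of a copula density are uniform), and on $\{\mathbf 1\}^{\perp}$ has operator norm equal to the maximal correlation $\eta_{t+j-1}\le\hat\eta<1$ by (\ref{ipo1}).

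\emph{Step 3.} Let $P$ be the operator with kernel $\equiv 1$, i.e.\ the orthogonal projection onto constants; then $A_jP=PA_j=P$ and $P^2=P$, so a short induction gives $T_{c_{t,t+k}}-P=(A_1-P)(A_2-P)\cdots(A_k-P)$. Each factor $A_j-P$ annihilates $\mathbf 1$ and coincides with $A_j$ on $\{\mathbf 1\}^{\perp}$, hence $\|A_j-P\|_{\mathrm{op}}\le\hat\eta$, while $A_k-P$ is Hilbert--Schmidt with $\|A_k-P\|_{\mathrm{HS}}=\|c_{t+k-1,t+k}-1\|_{L^2([0,1]^2)}=\sqrt{\|c_{t+k-1,t+k}\|_{L^2}^2-1}\le\sqrt{M^2-1}$. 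Applying $\|BC\|_{\mathrm{HS}}\le\|B\|_{\mathrm{op}}\|C\|_{\mathrm{HS}}$ repeatedly,
\[
\|c_{t,t+k}-1\|_{L^2([0,1]^2)}=\|T_{c_{t,t+k}}-P\|_{\mathrm{HS}}\le\Big(\prod_{j=1}^{k-1}\|A_j-P\|_{\mathrm{op}}\Big)\,\|A_k-P\|_{\mathrm{HS}}\le\sqrt{M^2-1}\;\hat\eta^{\,k-1},
\]
uniformly in $t$. With Step 1 this yields $\beta_k\le\tfrac12\sqrt{M^2-1}\,\hat\eta^{\,k-1}\to 0$, so $\mathbf Y$ is $\beta$-mixing (in fact geometrically so).

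I expect the crux to be Step 3: the argument hinges on playing the Hilbert--Schmidt norm (in which the $L^2$-size of the density, and hence the total-variation/$\beta$ bound, is naturally measured) off against the operator norm (which the maximal correlation controls and which is what contracts). In the stationary case this is Beare's estimate for a single power $T_c^{\,k}$; here one must instead bound a product of \emph{distinct} operators, which is precisely why the hypotheses ask that the one-step densities be \emph{uniformly} bounded in $L^2$ (keeping the lone Hilbert--Schmidt factor bounded) and that $\hat\eta=\sup_t\eta_t<1$ (so the remaining operator-norm factors contract at a $t$-independent rate). Secondary points to check carefully are the Markov reduction and the identity $\beta(C)=\tfrac12\|c-1\|_{L^1}$ in Step 1, and the operator identity $T_{c_{t,t+k}}-P=\prod_j(A_j-P)$; symmetry of the densities, beyond making each $A_j$ self-adjoint so that $\eta_t$ is a spectral quantity, is not otherwise needed for the estimates above.
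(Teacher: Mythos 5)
Your proof is correct, and it reaches the paper's exact bound $\beta_k\le\tfrac12\sqrt{M^2-1}\,\hat\eta^{\,k-1}$, but by a genuinely different route. The paper follows Beare (2010) literally: it expands each one-step density as $c_{t,t+1}(u,v)=1+\sum_i\lambda_{i,t}\phi_i(u)\phi_i(v)$ and then writes $c_{t,t+k}(u,v)=1+\sum_i\bigl(\prod_{j}\lambda_{i,t+j}\bigr)\phi_i(u)\phi_i(v)$, which lets it compute the $L^2$ norm of $c_{t,t+k}-1$ explicitly and pull out $\hat\eta^{2(k-1)}$ from the products of eigenvalues. Note that this step tacitly requires all the densities $c_{t,t+1},c_{t+1,t+2},\dots$ to be diagonalized by a \emph{common} orthonormal system $(\phi_i)$ (the $\phi_i$ in the paper carry no $t$-index); that is automatic in the stationary case, where one is simply taking powers of a single operator, but it is an extra structural assumption in the non-stationary setting, where the associated Markov operators need not commute. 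Your argument replaces the spectral expansion by the factorization $T_{c_{t,t+k}}-P=(A_1-P)\cdots(A_k-P)$ together with the inequality $\|BC\|_{\mathrm{HS}}\le\|B\|_{\mathrm{op}}\|C\|_{\mathrm{HS}}$, using the maximal correlation only as the operator norm of $A_j-P$ on $\{\mathbf 1\}^\perp$; this needs no simultaneous diagonalization (and no nonnegativity of eigenvalues, which the paper also asserts without use), so it actually covers the theorem as stated in full generality and is the more robust way to do the non-stationary bookkeeping. The Markov reduction in your Step 1 and the identification of the $\ast$-product density with the kernel of the composed operators coincide with the paper's first part, so the only substantive divergence is this operator-norm versus eigenvalue-product handling of the contraction, where your version is, if anything, tighter logically.
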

\begin{proof} The proof follows that of Theorem 3.1 in Beare (2010) who
proves a similar result for stationary copula-based Markov
processes. First of all, since the stochastic process is Markovian, (\ref{beta1}) can be rewritten in terms of the cumulative distribution functions
of $(Y_t,Y_{t+k})$, $Y_{t}$ and $Y_{t+k}$ ($F_{t,t+k}$, $F_{t}$ and $F_{t+k}$, respectively) and the total
 variation norm $\|\cdot\|_{TV}$ (see Bradley, 2007)
and then, applying Sklar's theorem, we can write
$$\begin{aligned}\tilde{\beta}(\mathcal{F}_{-\infty}^{t},\mathcal{F}_{t,t+k}^{+\infty})&= \frac{1}{2}\parallel
F_{t,t+k}(x,y)-F_t(x)F_{t+k}(y)\parallel_{TV}=\\
&=\frac{1}{2}\parallel
C_{t,t+k}(F_t(x),F_{t+k}(y))-F_t(x)F_{t+k}(y)\parallel_{TV}\leq\\
&\leq \frac{1}{2}\parallel
C_{t,t+k}(u,v)-uv)\parallel_{TV}.\end{aligned}$$ From
(\ref{star1}) it follows that all bivariate copulas of type
$C_{t,t+k}$ for $t\in\mathbb Z$ and $k \geq 1$ are absolutely
continuous: let us denote their density as $c_{t,t+k}$. Then
$$\tilde{\beta}(\mathcal{F}_{-\infty}^{t},\mathcal{F}_{t+k}^{+\infty})\leq \frac{1}{2}\parallel
c_{t,t+k}(u,v)-1\parallel_{\mathbb{L}^1}\leq \frac{1}{2} \parallel
c_{t,t+k}(u,v)-1\parallel_{\mathbb{L}^2}$$
and
$$\beta_k\leq \frac{1}{2} \sup_{t \in \mathbb{Z}}\parallel
c_{t,t+k}(u,v)-1\parallel_{\mathbb{L}^2}.$$
Since $c_{t,t+1}$ is a
symmetric square-integrable joint density with uniform margins, it admits the following series expansion in terms of
a complete orthonormal sequence $(\phi_i)_{i\geq 1}$
in $\mathbb{L}^2[0,1]$,
$$c_{t,t+1}(u,v)=1+\sum_{i=1}^{+\infty} \lambda_{i,t} \phi_i(u)\phi_i(v),$$
where the eigenvalues $(\lambda_{i,t})_i$ form a
square-summable sequence of nonnegative real numbers: notice that, as proved in Lancaster(1958)
\begin{equation}\label{maggio1}\underset{i\geq 1}\max \,\lambda _{i,t}=\eta_t.\end{equation}

Applying (\ref{star1}), we get
$$c_{t,t+k}(u,v)=1+\sum_{i=1}^{+\infty} \left(\prod_{j=0}^{k-1}\lambda_{i,t+j}\right) \phi_i(u)\phi_i(v).$$
Then, using (\ref{maggio1}) and (\ref{ipo1}), we get
\begin{equation}\begin{aligned}\parallel c_{t,t+k}(u,v)-1\parallel_{\mathbb{L}^2}&=\left\| \sum_{i=1}^{+\infty}
 \left(\prod_{j=0}^{k-1}\lambda_{i,t+j}\right) \phi_i(u)\phi_i(v) \right\|_{\mathbb{L}^2}= \left[\sum_{i=1}^{+\infty} \left(\prod_{j=0}^{k-1}\lambda^2_{i,t+j}\right)\right]^{1/2}=\\
 &= \left[\sum_{i=1}^{+\infty} \lambda^2_{i,t}\left(\prod_{j=1}^{k-1}\lambda^2_{i,t+j}\right)\right]^{1/2}
 \leq
 \left[\sum_{i=1}^{+\infty} \lambda^2_{i,t}
 \left(\prod_{j=1}^{k-1}\eta^2_{t+j}\right)\right]^{1/2}\leq\\
&\leq \hat\eta^{k-1}\left[\sum_{i=1}^{+\infty}
\lambda^2_{i,t} \right]^{1/2}=\hat\eta^{k-1}\parallel
 c_{t,t+1}(u,v)-1\parallel_{\mathbb{L}^2}.\end{aligned}\end{equation}
Therefore
$$\beta_k\leq \frac{1}{2}\hat\eta^{k-1} \sup_{t\in \mathbb{Z}}\parallel
c_{t,t+1}(u,v)-1\parallel_{\mathbb{L}^2}$$
which, since $(c_{t,t+1})_{t\in\mathbb Z}$ is uniformly bounded in $L^2([0,1])$, tends to zero as
$k\rightarrow +\infty$.
\end{proof}
\section{A gaussian convolution-based Markov process}\label{gaussian}
From now on, we assume that the Markov process ${\bf Y}$ is obtained through
\begin{equation}\label{C_UR1}Y_t= Y_{t-1} +
\xi_t,\qquad Y_0=0,\end{equation} where $(\xi_t)_{t\geq 1}$ is a
sequence of identically distributed random variables such that
$\xi_t$ is dependent on $Y_{t-1}$ for each $t$. The dependence
structure is modelled by a time-invariant copula function $C$.
\smallskip

The process defined in (\ref{C_UR1}) is not stationary. However,
we can determine the distribution of $Y_t$ for each $t$ thanks to
the $C$-convolution operator (denoted by $\stackrel{C}{\ast}$),
introduced in Cherubini et al. (2011) as a tool to recover the
distribution of the sum of two dependent random variables. As
shown in Cherubini et al. (2011, 2012 and 2015) the
$C$-convolution technique may be used in the construction of
dependent increments stochastic processes like (\ref{C_UR1}). More
precisely, if $F_{t-1}$ is the cumulative distribution function of
$Y_{t-1}$ and $H_{t}$ that of $\xi_t$, we may recover the
cumulative distribution function of $Y_{t}$ iterating the
$C$-convolution for all $t$
\begin{equation}\label{CURDist}F_t(y_t)=(F_{t-1}\stackrel{C}{\ast}H_{t})(y_t)=\int_0^{1}D_1C(w,H_{t}(y_t-
F^{-1}_{t-1}(w)))dw, \quad t\geq 2\end{equation}
while, the copula associated to $(Y_{t-1},Y_t)$ is
\begin{equation}\label{cc1}C_{t-1,t}(u,v)=\int_0^{u}D_1C(w,H_{t}(F_t^{-1}(v)-
F^{-1}_{t-1}(w)))dw, \quad t\geq 2\end{equation}
where $D_1C(u,v)=\frac{\partial}{\partial u}C(u,v)$.

Equations (\ref{CURDist}) and (\ref{cc1}) provide the ingredients
to construct discrete times Markov processes according to Darsow
et al. (1992).

Our model (\ref{C_UR1}) is a sort of a
modified version of a random walk process where the independence
assumption for the innovations $(\xi_t)_{t\geq 1}$ is no longer
required: however, its weakness is that in most cases the
distribution function cannot be expressed in closed form and it
may be evaluated only numerically.
\medskip

From now on we assume that
innovations $(\xi_t)_{t\geq 1}$ are gaussian identically distributed with
zero mean and standard deviation $\sigma_{\xi}$ and that the copula
between $Y_{t-1}$ and $\xi_t$ is a (stationary) gaussian copula
with constant parameter $\rho$ for all $t$. This way, the distribution of
$Y_t$ is gaussian for all $t$ and, more specifically, in section 4.3.1 of Cherubini et al. (2016) it is shown that

\begin{equation}\label{Gaussian_C_UR1}Y_t\sim N(0,V_t^{2}),\end{equation} where
\begin{equation}\label{VarGaussian_C_UR1}V_t^{2}=Var(Y_t)=V_1^{2}+(t-1)\sigma_{\xi}^2+
2\rho \sigma_{\xi} \sum_{i=1}^{t-1}V_{t-i}, \quad t\geq
2,\end{equation} where $V_1^2=\sigma_{\xi}^2$ since by assumption
$Y_1=\xi_1$. Moreover, the copula between $Y_{t}$ and $Y_{t+1}$ is
gaussian with parameters
$$\tau_{t,t+1}=\frac{ V_{t}+\rho \sigma_{\xi}}{V_{t+1}}, \quad t\geq 2$$
since $\mathbb{E}[Y_t Y_{t+1}]=V_t^2+\rho V_t \sigma_{\xi}$.

The limiting behavior of the standard deviation $V_t$ has also been
analyzed in Cherubini et al. (2016) where it is proved that
$$
\underset{t\rightarrow +\infty}\lim V_t =\left\{%
\begin{array}{ll}
    -\frac{\sigma_{\varepsilon}}{2 \rho}, & \hbox{ if \ $\rho \in (-1,0)$} \\
    +\infty, & \hbox{otherwise.} \\
\end{array}%
\right.$$
Notice that only in case of negative correlation with the increments, the standard deviation of the levels does not explode:
in the following we will restrict the analysis to the case $\rho\in (-1,0)$.

\subsection{Moments and autocorrelation function}
In this subsection we study the behavior of moments and
autocorrelation functions of the process $(Y_t)_{t\geq 1}$ when $t
\rightarrow +\infty$. It is just the case to recall that in the
standard random walk model the $k$-th order autocorrelation
function of $(Y_t)_{t\geq 1}$ tends to $1$ as $t\rightarrow
+\infty$, for each lag $k$. In our more general setting, this is
no longer true. The limit of the $k$-th order autocorrelation
function of $(Y_t)_{t\geq 1}$ is a function of $k$ and $\rho$ as
the following proposition shows.
\begin{proposition} Let $\rho\in(-1,0)$. The $k$-th order autocorrelation function of $(Y_t)_{t\geq
1}$ tends to $(1-2 \rho^2)^{k}$ for any $k\geq 1$ as $t
\rightarrow +\infty$.
\end{proposition}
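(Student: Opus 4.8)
The plan is to express the $k$-th order autocorrelation $\rho_k(t) := \mathrm{Corr}(Y_t, Y_{t+k})$ in terms of the covariances $\mathbb{E}[Y_tY_{t+k}]$ and the variances $V_t^2$, and then pass to the limit using the known limiting behaviour $V_t \to -\sigma_\xi/(2\rho)$ (valid since $\rho \in (-1,0)$). First I would record the one-step relation: from $Y_{t+1} = Y_t + \xi_{t+1}$ and $\mathbb{E}[Y_t\xi_{t+1}] = \rho V_t \sigma_\xi$ (which is exactly the computation underlying the stated formula $\mathbb{E}[Y_tY_{t+1}] = V_t^2 + \rho V_t\sigma_\xi$), one gets, for the covariance $\Gamma_{t,s} := \mathbb{E}[Y_tY_s]$ with $s > t$, the recursion $\Gamma_{t,s+1} = \Gamma_{t,s} + \mathbb{E}[Y_t\xi_{s+1}]$. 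The key sub-step is to identify $\mathbb{E}[Y_t\xi_{s+1}]$. Since the increment $\xi_{s+1}$ is linked to $Y_s$ through the gaussian copula with parameter $\rho$, in the jointly gaussian world this means $\xi_{s+1} = (\rho\sigma_\xi/V_s)Y_s + \text{(independent noise)}$, hence $\mathbb{E}[Y_t\xi_{s+1}] = (\rho\sigma_\xi/V_s)\Gamma_{t,s}$, giving the clean recursion $\Gamma_{t,s+1} = \Gamma_{t,s}\bigl(1 + \rho\sigma_\xi/V_s\bigr)$.

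Iterating this from $s = t$ up to $s = t+k-1$ yields $\Gamma_{t,t+k} = V_t^2 \prod_{i=0}^{k-1}\bigl(1 + \rho\sigma_\xi/V_{t+i}\bigr)$, and therefore
$$\rho_k(t) = \frac{\Gamma_{t,t+k}}{V_t V_{t+k}} = \frac{V_t}{V_{t+k}}\prod_{i=0}^{k-1}\left(1 + \frac{\rho\sigma_\xi}{V_{t+i}}\right).$$
Now I would take $t \to +\infty$. Since $V_t \to L := -\sigma_\xi/(2\rho)$, we have $V_t/V_{t+k} \to 1$, and each factor $1 + \rho\sigma_\xi/V_{t+i} \to 1 + \rho\sigma_\xi/L = 1 + \rho\sigma_\xi \cdot (-2\rho/\sigma_\xi) = 1 - 2\rho^2$. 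Since the product has a fixed finite number $k$ of factors, the limit of the product is the product of the limits, so $\rho_k(t) \to (1-2\rho^2)^k$, which is the claim. (One checks $1 - 2\rho^2 \in (-1,1)$ for $\rho \in (-1,0)$, consistent with a genuine correlation and with the $\beta$-mixing picture; note also $\tau_{t,t+1} = (V_t + \rho\sigma_\xi)/V_{t+1} \to 1 - 2\rho^2$, matching the $k=1$ case.)

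The main obstacle is the justification of the identity $\mathbb{E}[Y_t\xi_{s+1}] = (\rho\sigma_\xi/V_s)\,\mathbb{E}[Y_tY_s]$ for $t < s$. This requires more than the bivariate gaussian copula assumption on the pair $(Y_s,\xi_{s+1})$: one needs the whole vector $(Y_t,Y_s,\xi_{s+1})$ — or equivalently $(Y_1,\dots,Y_s,\xi_{s+1})$ — to be jointly gaussian, so that the conditional expectation $\mathbb{E}[\xi_{s+1}\mid Y_1,\dots,Y_s]$ is linear and, by the Markov property, reduces to the linear regression on $Y_s$ alone with coefficient $\rho\sigma_\xi/V_s$. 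This joint gaussianity follows by induction from (\ref{C_UR1}): $Y_1 = \xi_1$ is gaussian, and at each step $(Y_1,\dots,Y_{t-1})$ jointly gaussian together with the gaussian-copula link between $Y_{t-1}$ and $\xi_t$ forces $(Y_1,\dots,Y_{t-1},\xi_t)$ jointly gaussian, hence $(Y_1,\dots,Y_t)$ jointly gaussian; this is precisely the construction in section 4.3.1 of Cherubini et al. (2016) that underlies (\ref{Gaussian_C_UR1}). Once this is in hand, the rest is the elementary limit computation above.
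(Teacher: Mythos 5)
Your proof is correct, and it arrives at the same product formula as the paper by a genuinely different route. The paper obtains the correlation of $(Y_t,Y_{t+k})$ from copula machinery: the $\ast$-product of gaussian copulas is gaussian with parameter equal to the product of the parameters, so $\tau_{t,t+k}=\prod_{s=0}^{k-1}(V_{t+s}+\rho\sigma_\xi)/V_{t+s+1}$, and then each factor is sent to $1-2\rho^2$ exactly as you do. You instead derive the covariance recursion $\Gamma_{t,s+1}=\Gamma_{t,s}\bigl(1+\rho\sigma_\xi/V_s\bigr)$ from the regression identity $\mathbb{E}[\xi_{s+1}\mid Y_s]=(\rho\sigma_\xi/V_s)Y_s$ combined with markovianity; note that via the tower property, $\mathbb{E}[Y_t\xi_{s+1}]=\mathbb{E}\bigl[Y_t\,\mathbb{E}[\xi_{s+1}\mid Y_1,\dots,Y_s]\bigr]=\mathbb{E}\bigl[Y_t\,\mathbb{E}[\xi_{s+1}\mid Y_s]\bigr]$, so bivariate gaussianity of $(Y_s,\xi_{s+1})$ plus the Markov property already suffices, and the full joint gaussianity you establish by induction, while true, is slightly more than is needed. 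After telescoping, your expression $\frac{V_t}{V_{t+k}}\prod_{i=0}^{k-1}(1+\rho\sigma_\xi/V_{t+i})$ coincides with the paper's $\tau_{t,t+k}$, and the limit computation is identical. Your version is more self-contained, since it does not rely on the cited fact that the $\ast$-product of gaussian copulas multiplies parameters, at the cost of having to justify the regression step, which you do. (Minor note: the paper's displayed limit (\ref{f1}) contains a typo, $\rho\sigma_\xi^2$ in place of $\rho\sigma_\xi$, in the numerator; your computation has it right.)
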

\begin{proof}
As proved in section 4.3.1 in Cherubini et al. (2016), using the fact that the $\ast$-product of two gaussian copulas has a parameter given by the product of the
parameters of the copulas involved in the $\ast$-product, we have that the
copula between $Y_{t}$ and $Y_{t+k}$ is gaussian with parameter
$$\tau_{t,t+k}=\prod_{s=0}^{k-1} \frac{V_{t+s}+\rho \sigma_{\xi}}{V_{t+s+1}}.$$
Therefore, since as $t \rightarrow +\infty$ and for any $s\geq 1$
\begin{equation}\label{f1}\frac{V_{t+s}+\rho \sigma_{\xi}}{V_{t+s+1}}\rightarrow \frac{-\frac{\sigma_{\xi}}{2 \rho}+\rho \sigma_{\xi}^2}{-\frac{\sigma_{\xi}}{2 \rho}}=
1-2 \rho^2,\end{equation}
we easily get the result.
\end{proof}
On the other hand, the innovations $(\xi_t)_{t\geq 1}$ are no longer serially independent as in the random walk case and the $k$-th order autocorrelation function approaches to a limit
which again depends on $\rho$ and $k$.

\begin{proposition} Let $\rho\in(-1,0)$. The $k$-th order autocorrelation function of $(\xi_t)_{t\geq
1}$ tends to $-\rho^2 (1-2 \rho^2)^{k-1}$ for any $k\geq 1$ as $t
\rightarrow +\infty$.
\end{proposition}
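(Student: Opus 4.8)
The plan is to compute $\mathrm{Cov}(\xi_t,\xi_{t+k})$ directly from the defining recursion $\xi_t = Y_t - Y_{t-1}$, expand everything in terms of the covariances of the levels $Y_s$, and then pass to the limit using the already-established facts $V_t \to -\sigma_\xi/(2\rho)$ and $\mathbb{E}[Y_sY_{s+1}] = V_s^2 + \rho V_s \sigma_\xi$, together with the gaussian-copula parameter $\tau_{t,t+k} = \prod_{s=0}^{k-1}(V_{t+s}+\rho\sigma_\xi)/V_{t+s+1}$ from the previous proposition. First I would write
$$\mathrm{Cov}(\xi_t,\xi_{t+k}) = \mathbb{E}[(Y_t - Y_{t-1})(Y_{t+k} - Y_{t+k-1})] = R_{t,t+k} - R_{t,t+k-1} - R_{t-1,t+k} + R_{t-1,t+k-1},$$
where $R_{a,b} := \mathbb{E}[Y_aY_b] = \tau_{a,b}\,V_aV_b$. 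Each of the four terms is a product of two standard deviations and a product of copula parameters, all of which have known limits as $t\to\infty$.

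The key computational step is to factor out the common piece. Writing $L := -\sigma_\xi/(2\rho)$ for the limit of $V_t$ and $q := 1-2\rho^2$ for the limit of each ratio $(V_{t+s}+\rho\sigma_\xi)/V_{t+s+1}$, one gets $R_{t,t+k} \to L^2 q^k$, $R_{t,t+k-1}\to L^2 q^{k-1}$, $R_{t-1,t+k-1}\to L^2 q^k$ (the same as the first, since both index pairs are at distance $k$), while $R_{t-1,t+k}\to L^2 q^{k+1}$. Hence
$$\mathrm{Cov}(\xi_t,\xi_{t+k}) \to L^2\big(q^k - q^{k-1} - q^{k+1} + q^k\big) = L^2 q^{k-1}\big(2q - 1 - q^2\big) = -L^2 q^{k-1}(1-q)^2.$$
Since $1-q = 2\rho^2$, this equals $-L^2 \cdot 4\rho^4 \cdot q^{k-1} = -(\sigma_\xi^2/(4\rho^2))\cdot 4\rho^4\cdot q^{k-1} = -\sigma_\xi^2\rho^2 q^{k-1}$. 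For the variance in the denominator, $\mathrm{Var}(\xi_t) = \mathrm{Var}(\xi_t)$ is constant equal to $\sigma_\xi^2$ by assumption, so the $k$-th order autocorrelation of the innovations tends to $-\rho^2(1-2\rho^2)^{k-1}$, which is the claim.

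The main obstacle is not any single hard estimate but keeping the bookkeeping of the four covariance terms straight — in particular noticing that $R_{t-1,t+k-1}$ and $R_{t,t+k}$ share the same limiting lag-$k$ value, so the four limits collapse to the three distinct powers $q^{k-1}, q^k, q^{k+1}$ with multiplicities $-1, +2, -1$. One should also be slightly careful that the limit in the previous proposition was stated for the ratio at a fixed shift $s$ as $t\to\infty$; here the shifts $s$ range over $0,\dots,k$ but $k$ is fixed, so finitely many convergent factors are multiplied and the product converges to $q^k$ without any uniformity issue. A final minor check is that $\mathrm{Var}(\xi_t)=\sigma_\xi^2$ exactly for all $t$ (true by the standing assumption that the $\xi_t$ are identically distributed $N(0,\sigma_\xi^2)$), so no limit is needed in the denominator.
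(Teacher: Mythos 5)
Your proposal is correct and follows essentially the same route as the paper: expand $\mathrm{Cov}(\xi_t,\xi_{t+k})$ into the four level covariances $\mathbb{E}[Y_aY_b]=\tau_{a,b}V_aV_b$, pass to the limit using $V_t\to-\sigma_\xi/(2\rho)$ and $\tau_{t,t+k}\to(1-2\rho^2)^k$, and simplify $2q^k-q^{k-1}-q^{k+1}=-q^{k-1}(1-q)^2$ with $q=1-2\rho^2$. The bookkeeping and the final normalization by the constant variance $\sigma_\xi^2$ match the paper's argument exactly.
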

\begin{proof}
We compute first the autocovariance of order $k$, with $k\geq 1$,
$\mathbb{E}[\xi_t \xi_{t+k}]$. We have
$$\begin{aligned}\mathbb{E}[\xi_t
\xi_{t+k}]&=\mathbb{E}[(Y_t-Y_{t-1})(Y_{t+k}-Y_{t+k-1})]=\\
&=\mathbb{E}[Y_t Y_{t+k}]-\mathbb{E}[Y_t
Y_{t+k-1}]-\mathbb{E}[Y_{t-1} Y_{t+k}]+\mathbb{E}[Y_{t-1}
Y_{t+k-1}]=\\
&=\tau_{t,t+k}V_t V_{t+k}-\tau_{t,t+k-1}V_t
V_{t+k-1}-\tau_{t-1,t+k}V_{t-1}
V_{t+k}+\tau_{t-1,t+k-1}V_{t-1} V_{t+k-1}.\end{aligned}$$
Since for any fixed
$k\geq 1$, $\tau_{t,t+k} \rightarrow (1-2 \rho^2)^k$ and $V_t\rightarrow -\frac{\sigma_{\xi}}{2\rho}$ as $t
\rightarrow +\infty$ we get
$$\begin{aligned}\mathbb{E}[\xi_t\xi_{t+k}]\rightarrow &\frac{\sigma^2_{\xi}}{4 \rho^2}\left[(1-2 \rho^2)^k-(1-2 \rho^2)^{k-1}
-(1-2 \rho^2)^{k+1}+(1-2 \rho^2)^k\right]=\\
&=-\rho^2 \sigma_{\xi}^2 (1-2 \rho^2)^{k-1}, \quad as \ t \rightarrow +\infty.\end{aligned}$$
Moreover, it is immediate to find the statement of the proposition
since as $t \rightarrow +\infty$
$$corr(\xi_t \xi_{t+k})\rightarrow -\rho^2 (1-2 \rho^2)^{k-1}.$$
\end{proof}

\subsection{$\beta$-mixing properties}
In our gaussian framework $c_{t,t+1}$ is the
density of a gaussian copula for which it is well known that the
maximal correlation coefficient is equal to the absolute value of
the simple correlation coefficient (see Lancaster, 1957). Therefore, according to the notation of Theorem \ref{teo},
for each $t$,
$\eta_t=|\tau_{t,t+1}|$. The following results, which is an application of Theorem \ref{teo}, holds
\begin{corollary}
The Markov process defined by (\ref{C_UR1}) with  $\xi_t\sim N(0,\sigma_{\xi})$ and $\rho\in (-1,0)$ is $\beta$-mixing.
\end{corollary}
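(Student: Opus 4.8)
The plan is to verify the three hypotheses of Theorem \ref{teo} for the gaussian convolution-based Markov process and then invoke the theorem directly. The process is Markov by construction (it is built through the $C$-convolution iteration following Darsow et al. (1992)), so the structural assumption is in place. It remains to check: (i) each copula $C_{t,t+1}$ is absolutely continuous with symmetric, square-integrable density $c_{t,t+1}$; (ii) the family $(c_{t,t+1})_{t\in\mathbb Z}$ is uniformly bounded in $L^2([0,1]^2)$; and (iii) the maximal correlation coefficients satisfy $\hat\eta=\sup_t\eta_t<1$.

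For (i), recall that in this framework $C_{t,t+1}$ is the gaussian copula with parameter $\tau_{t,t+1}=\dfrac{V_t+\rho\sigma_\xi}{V_{t+1}}$; since $\rho\in(-1,0)$ and the $V_t$'s are finite and bounded away from the degenerate regime, $|\tau_{t,t+1}|<1$ strictly, so each gaussian copula is absolutely continuous, and its density is symmetric in its two arguments (gaussian copula densities always are). Square-integrability of a gaussian copula density holds precisely when $|\tau_{t,t+1}|<1$. For (ii), the $L^2$-norm of a gaussian copula density is an explicit increasing function of $|\tau_{t,t+1}|$ (it equals $(1-\tau^2)^{-1/2}$ up to the constant coming from the chi-square-type integral — one can quote the standard formula), so it suffices to bound $\sup_t|\tau_{t,t+1}|$ away from $1$; this is exactly condition (iii), so (ii) and (iii) reduce to the same estimate.

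The crux of the argument is therefore to show $\hat\eta=\sup_{t\geq 2}|\tau_{t,t+1}|<1$. Since $\eta_t=|\tau_{t,t+1}|$ by Lancaster (1957), I would argue as follows. By (\ref{f1}), $\tau_{t,t+1}\to 1-2\rho^2$ as $t\to+\infty$, and for $\rho\in(-1,0)$ one has $1-2\rho^2\in(-1,1)$, so $|\tau_{t,t+1}|\to|1-2\rho^2|<1$. Hence there exists $T$ with $\sup_{t\geq T}|\tau_{t,t+1}|<1$. For the finitely many remaining indices $t\in\{2,\dots,T-1\}$, each $|\tau_{t,t+1}|<1$ individually (gaussian copula, non-degenerate), so the maximum over this finite set is also strictly below $1$. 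Taking the larger of the two bounds gives $\hat\eta<1$. (One should also note that the copula family is constant in $t$ for $t\le 1$ or, more simply, that only $t\ge 2$ matters since $Y_0=0$ and $Y_1=\xi_1$; the indexing over $\mathbb Z$ in Theorem \ref{teo} is used only through the supremum, which here is effectively over $t\ge 2$.)

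The main obstacle — really a bookkeeping point rather than a deep one — is handling the sup over \emph{all} $t$ when convergence (\ref{f1}) only controls the tail: the resolution is the standard split into a tail, controlled by the limit $|1-2\rho^2|<1$, and a finite initial segment, controlled termwise. Once $\hat\eta<1$ is established, (ii) follows from the explicit $L^2$ formula for gaussian copula densities evaluated at $\tau$ with $|\tau|\le\hat\eta$, and Theorem \ref{teo} yields that ${\bf Y}$ is $\beta$-mixing.
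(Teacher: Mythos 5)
Your proposal is correct and follows essentially the same route as the paper: establish $|\tau_{t,t+1}|<1$ for each $t$, use the limit (\ref{f1}) together with $|1-2\rho^2|<1$ to conclude $\hat\eta=\sup_t|\tau_{t,t+1}|<1$, bound $\|c_{t,t+1}-1\|_{\mathbb{L}^2}$ uniformly via the explicit gaussian-copula formula, and invoke Theorem \ref{teo}. The only spot where you assert what the paper computes is the termwise bound $|\tau_{t,t+1}|<1$, which the paper derives concretely from the equivalence with $(V_t+\rho\sigma_\xi)^2<V_{t+1}^2$, guaranteed by $\rho^2<1$ and the variance recursion (\ref{VarGaussian_C_UR1}).
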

\begin{proof}
Firstly notice that
$$|\tau_{t,t+1}|<1, \, \forall t.$$
In fact this is equivalent to $(V_t+\rho\sigma_\xi)^2< V_{t+1}^2$ which is always verified since $\rho^2<1$ by assumption.

Thanks to (\ref{f1}), since $\vert 1-2\rho^2\vert <1$, we have that
$|\tau_{t,t+1}|=\eta_t$ is bounded by a constant smaller than 1, that is (\ref{ipo1}) is satisfied.
Furthermore, it is not hard to prove that for any $t$
$$\parallel c_{t,t+1}(u,v)-1\parallel_{\mathbb{L}^2}=\frac{\tau_{t,t+1}^2}{1-\tau^2_{t,t+1}}\leq \frac{\hat \eta^2}{1-\hat\eta^2}.$$
Thus Theorem \ref{teo} applies.

\end{proof}

\section{Concluding remarks}\label{conclusion}
In this paper we provide conditions under which a non-stationary copula-based Markov process is $\beta$-mixing. Our results represent a generalization of those in
Beare (2010), where the author considers the stationary case. Our analysis is focused on the particular case of a gaussian Markov process with dependent
increments that represents a generalization of the standard gaussian random walk. In this particular non-stationary setting it is proved that the $k$-th order autocorrelation function
of the process does not converge to $1$, as in the random walk case, but to a quantity that depends on the lag and the correlation between the state
variable and the innovation, which is assumed to be time-invariant.
Additionally, it is proved that the process satisfies the conditions required to be $\beta$-mixing.

\end{document}